\documentclass[a4paper,12pt]{amsart}
\usepackage[english]{babel}
\usepackage{amsmath,amssymb,amsthm}

\usepackage[pdftex]{color}
\usepackage[bookmarks=true,hyperindex,pdftex,colorlinks, citecolor=blue,linkcolor=blue, urlcolor=blue]{hyperref}

\parskip=1ex
\textwidth=16cm
 \hoffset=-1.5cm

\theoremstyle{plain}
\newtheorem{theorem}{Theorem}
\newtheorem{corollary}[theorem]{Corollary}
\newtheorem{prop}[theorem]{Proposition}
\newtheorem{lemma}[theorem]{Lemma}

\theoremstyle{definition}
\newtheorem{remark}[theorem]{Remark}
\newtheorem{example}[theorem]{Example}

 \DeclareMathOperator{\supp}{supp\,}

\newcommand{\R}{\mathbb{R}}
\newcommand{\N}{\mathbb{N}}
 \newcommand{\Q}{\mathbb{Q}}
 \newcommand{\eps}{\varepsilon}

\renewcommand{\leq}{\leqslant}
\renewcommand{\geq}{\geqslant}

 \DeclareMathOperator{\NA}{NA}

\begin{document}
\title{Some geometric properties of Read's space}

\author[Kadets]{Vladimir Kadets}
\author[L\'opez]{Gin\'es L\'opez-P\'erez}
\author[Mart\'{\i}n]{Miguel Mart\'{\i}n}

\address[Kadets]{School of Mathematics and Computer Sciences \\
V. N. Karazin Kharkiv National University \\ pl.~Svobody~4 \\
61022~Kharkiv \\ Ukraine
\newline
\href{http://orcid.org/0000-0002-5606-2679}{ORCID: \texttt{0000-0002-5606-2679} }
}
\email{vova1kadets@yahoo.com}

\address[L\'opez]{Departamento de An\'{a}lisis Matem\'{a}tico \\ Facultad de
 Ciencias \\ Universidad de Granada \\ 18071 Granada, Spain
\newline
\href{http://orcid.org/0000-0002-3689-1365}{ORCID: \texttt{0000-0002-3689-1365} }
 }
\email{glopezp@ugr.es}

\address[Mart\'{\i}n]{Departamento de An\'{a}lisis Matem\'{a}tico \\ Facultad de
 Ciencias \\ Universidad de Granada \\ 18071 Granada, Spain
\newline
\href{http://orcid.org/0000-0003-4502-798X}{ORCID: \texttt{0000-0003-4502-798X} }
 }
\email{mmartins@ugr.es}

\date{April 3rd, 2017; revised: June 2nd, 2017}

\subjclass[2010]{Primary 46B04; Secondary  46B03, 46B20}
\keywords{Banach space; proximinal subspace; norm attaining functionals; strict convexity; smoothness; weakly locally uniform rotundity}

\thanks{The research of the first author is done in frames of Ukrainian Ministry of Science and Education Research Program 0115U000481, and it was partially done during his stay in the University of Granada which was supported by the Spanish MINECO/FEDER grant MTM2015-65020-P. The research of the second and third authors is partially supported by Spanish MINECO/FEDER grant MTM2015-65020-P}

\begin{abstract}
We study geometric properties of the Banach space $\mathcal{R}$
constructed recently by C.~Read \cite{Read} which does not contain
proximinal subspaces of finite codimension greater than or equal
to two. Concretely, we show that the bidual of $\mathcal{R}$ is
strictly convex, that the norm of the dual of $\mathcal{R}$ is rough, and that $\mathcal{R}$ is weakly locally uniformly rotund (but it is not locally uniformly rotund). Apart of the own interest of the results, they provide a simplification of the proof by M.~Rmoutil
\cite{Rmoutil} that the set of norm-attaining functionals over
$\mathcal{R}$ does not contain any linear subspace of dimension
greater than or equal to two. Note that if a Banach space $X$ contains proximinal subspaces of finite codimension at least two, then the set of norm-attaining functionals over $X$ contain two-dimensional linear subspaces of $X^*$. Our results also provides positive answer to the questions of whether the dual of $\mathcal{R}$ is smooth and of whether $\mathcal{R}$ is weakly locally uniformly rotund \cite{Rmoutil}. Finally, we present a renorming of Read's space which is smooth, whose dual is smooth, and such that its set of norm-attaining functionals does not contain any linear subspace of dimension greater than or equal to two, so the renormed space does not contain proximinal subspaces of finite codimension greater than or equal to two.
\end{abstract}

\maketitle

\section{Introduction}
In his recent brilliant manuscript \cite{Read}, the late Charles J.~Read
constructed an equivalent norm $|||\cdot|||$ on $c_0$ such that
the space $\mathcal R = (c_0, |||\cdot|||)$ answers negatively the
following open problem by Ivan Singer from 1974 \cite{Singer}:
\begin{itemize}
\item[(S)] Is it true that every Banach space contains a proximinal subspace of finite codimension greater than or equal to $2$?
\end{itemize}
Recall that a subset $Y$ of a Banach space $X$ is said to be \emph{proximinal} if for every $x \in X$ there is $y_0 \in Y$ such that $\|x - y_0\| = \inf\{\|x-y\|\colon y\in Y\}$.

Many times, when an interesting and non-trivial space is
constructed to produce a counterexample, it can be also used to
solve some other different problems. In the case of Read's space,
this didn't wait for too long: in \cite[Theorem 4.2]{Rmoutil},
Martin Rmoutil demonstrates that the space $\mathcal R$ also gives
a negative solution to the following problem by Gilles Godefroy
\cite[Problem III]{Godefroy}:
\begin{itemize}
\item[(G)] is it true that for every Banach space $X$ the set $\NA(X)  \subset X^*$ of norm attaining functionals contains a two-dimensional linear subspace?
\end{itemize}
Recall that an element $f$ of the dual $X^*$ of a Banach space $X$ is said to be \emph{norm attaining} if there is $x\in X$ with $\|x\|=1$ such that $\|f\|=|f(x)|$.

The utility of Read's space makes clear that it is interesting to
increase our knowledge of its geometry. The aim of this note is to
show that Read's space fulfills the following properties:
\begin{itemize}
\item[(a)] the bidual $\mathcal{R}^{**}$ of Read's space is strictly convex;
\item[(b)] therefore, $\mathcal{R}^*$ is smooth; and
\item[(c)] $\mathcal{R}$ is also strictly convex;
\item[(d)] moreover, $\mathcal{R}$ is weakly locally uniformly rotund (WLUR);
\item[(e)] the norm of $\mathcal{R}^*$ is rough, so it is not Fr\'{e}chet differentiable at any point;
\item[(f)] and the norm of $\mathcal{R}$ is not locally uniformly rotund (LUR);
\item[(g)] moreover, there is $\rho>0$ such that every weakly open subset of the unit ball of $\mathcal{R}$ has diameter greater than or equal to $\rho$.
\end{itemize}

The main point in Rmoutil's proof was the demonstration that for
several closed subspaces $Y$ of $\mathcal R$, the corresponding
quotient spaces $X/Y$ are strictly convex. Observe that it follows
from assertion (b) above that ALL quotient spaces $\mathcal R/Y$
are strictly convex (because their duals $Y^\bot$ are smooth).
This gives a substantial simplification of the proof of
\cite[Theorem 4.2]{Rmoutil}. Even more, it follows from a 1987
paper by V.~Indumathi \cite[Proposition~1]{Indumathi-JAT}, that if
$X$ is a Banach space with $X^*$ smooth at every point of
$\NA(X)\cap S_{X^*}$, then a finite-codimensional subspace $Y$ of
$X$ is proximinal if and only if $Y^\perp\subset \NA(X)$. This
hypothesis on $X$ is satisfied if $X^*$ is smooth (clear) or if
$X$ is WLUR (see \cite{Yorke}). Let us comment that the facts that $\mathcal{R}$ is WLUR and $\mathcal{R^*}$ is smooth were said to be unexpected in the cited paper  \cite{Rmoutil} by Rmoutil.

As a final result of the paper, we present a renorming of Read's space which is smooth, whose dual is smooth and which solves negatively both problems (S) and (G).

Let us now present the notation we are using along the paper. We deal only with real scalars and real Banach spaces. By $\|\cdot\|_1$ and $\|\cdot\|_\infty$ we denote the standard norms on $\ell_1$ and $\ell_\infty$ respectively, and by $(e_n)_{n\in \N}$ we denote the canonical basis vectors,  i.e.\  the $k$-th coordinate $e_{n,k}$  of $e_n$ equals 0 for $n \neq k$ and equals $1$ for $n = k$. In the sequel, each time it will be clear from the context in what sequence space these $e_n$'s are considered. For $x = (x_k)_{k \in \N} \in \ell_\infty$,  $y = (y_k)_{k \in \N} \in \ell_1$, we use the standard notation $\langle x, y \rangle =  \sum_{n \in \N} x_n y_n$. If $X$ is an arbitrary Banach space, $B_X$ denotes its closed unit ball, $S_X$ denotes its unit sphere, and $X^*$ is the dual of $X$. We refer the reader to the books \cite{D-G-Z} and \cite{FHHMPZ} for background on geometry of Banach spaces.

Let us finally recall the definition of Read's space and its basic properties from \cite{Read}. Let $c_{00}(\Q)$ be the set of all terminating sequences with rational coefficients, and let $(u_n)_{n \in \N}$ be a sequence of elements of $c_{00}(\Q)$ which lists every element infinitely many times.
Further, let  $(a_n)_{n \in \N}$ be a strictly increasing sequence of positive integers satisfying that
$$
a_n > \max \supp u_n \quad \text{and} \quad a_n> \|u_n\|_1
$$
for every $n \in \N$. The equivalent norm $|||\cdot|||$ on $c_0$ is defined in \cite{Read} as follows:
\begin{equation} \label{eq-norm1}
|||x||| := \|x\|_\infty + \sum_{n \in \N}2^{-a_n^2} \left|\langle x, u_n - e_{a_n} \rangle\right| \qquad \bigl(x\in c_0\bigr).
\end{equation}
Now, as we mentioned above, $\mathcal R := (c_0, |||\cdot|||)$. To simplify the notation, let us denote
$$
v_n = \frac{u_n - e_{a_n}}{\|u_n - e_{a_n}\|_1} \in \ell_1,\qquad  r_n = 2^{-a_n^2}\|u_n - e_{a_n}\|_1
$$
for every $n\in \N$. Then \eqref{eq-norm1} rewrites as
\begin{equation} \label{eq-norm2}
|||x||| = \|x\|_\infty + \sum_{n \in \N}r_n  \left|\langle x, v_n \rangle\right|
\end{equation}
for every $x\in \mathcal{R}$. We finally remark that $ \sum\limits_{n \in \N}r_n \leq 2$ \cite{Read} and that the sequence $(v_n)_{n\in N}$ is dense in $S_{\ell_1}$. Observe that it follows that
\begin{equation}\label{eq-equivalent}
\|x\|_\infty \leq |||x||| \leq 3 \|x\|_\infty
\end{equation}
for every $x\in \mathcal{R}$ \cite[Eq.~4]{Read}.

\section{The results}

Our first aim is to show that $\mathcal{R}^{**}$ is strictly convex but, previously, we need a description of $\mathcal{R}^{**}$ which can be of independent interest.

\begin{prop} \label{prop-bidualnorm}
The bidual space $\mathcal R^{**}$ of Read's space is naturally isometric to $\ell_\infty$ equipped with the norm given by the formula \eqref{eq-norm2}.
\end{prop}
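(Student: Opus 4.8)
The plan is to identify the closed unit ball of $\mathcal R^{**}$ with an explicit weak-* closure and compute it directly. As a Banach space $\mathcal R = (c_0,|||\cdot|||)$ carries, by \eqref{eq-equivalent}, a norm equivalent to $\|\cdot\|_\infty$, so the identity map gives canonical set-theoretic identifications $\mathcal R^* = \ell_1$ and $\mathcal R^{**}=\ell_\infty$ (each with an equivalent dual, resp.\ bidual, norm), under which the canonical embedding $\mathcal R\hookrightarrow \mathcal R^{**}$ becomes the inclusion $c_0\subset \ell_\infty$. The point that makes everything go through is that the weak-* topology $\sigma(\mathcal R^{**},\mathcal R^*)$ coincides with $\sigma(\ell_\infty,\ell_1)$, since it depends only on the duality pairing and not on the chosen norm. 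First I would observe that the right-hand side of \eqref{eq-norm2} makes sense for every $x\in\ell_\infty$: one has $|\langle x,v_n\rangle|\leq \|x\|_\infty\|v_n\|_1=\|x\|_\infty$, so $\sum_n r_n|\langle x,v_n\rangle|\leq 2\|x\|_\infty$, and the formula defines a norm on $\ell_\infty$—still written $|||\cdot|||$—that extends the original one from $c_0$ and again satisfies $\|x\|_\infty\leq|||x|||\leq 3\|x\|_\infty$.

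By Goldstine's theorem the closed unit ball of the bidual norm is exactly the $\sigma(\ell_\infty,\ell_1)$-closure of $B_{\mathcal R}$ inside $\ell_\infty$, so the whole statement reduces to showing that this closure equals $\{x\in\ell_\infty:|||x|||\leq 1\}$; two norms with the same closed unit ball are equal. For one inclusion I would verify that the extended $|||\cdot|||$ is weak-* lower semicontinuous: the map $x\mapsto\|x\|_\infty=\sup_k|\langle x,e_k\rangle|$ is a supremum of weak-* continuous functions, each $x\mapsto|\langle x,v_n\rangle|$ is weak-* continuous because $v_n\in\ell_1$, and $\sum_n r_n|\langle x,v_n\rangle|$ is the supremum of its weak-* continuous finite partial sums; hence $\{|||\cdot|||\leq 1\}$ is weak-* closed and, containing $B_{\mathcal R}$, contains its closure.

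For the reverse inclusion I would use truncations. Given $x\in\ell_\infty$ with $|||x|||\leq 1$, set $x^{(m)}=(x_1,\dots,x_m,0,0,\dots)\in c_0$. Since $v_n\in\ell_1$ we have $\langle x^{(m)},v_n\rangle\to\langle x,v_n\rangle$ for each $n$, with $r_n|\langle x^{(m)},v_n\rangle|\leq r_n\|x\|_\infty$ summable; combining this with $\|x^{(m)}\|_\infty\uparrow\|x\|_\infty$ and dominated convergence in the sum yields $|||x^{(m)}|||\to|||x|||\leq 1$. Putting $z_m:=x^{(m)}/\max\{1,|||x^{(m)}|||\}$, each $z_m$ lies in $B_{\mathcal R}$, while $\max\{1,|||x^{(m)}|||\}\to 1$; therefore $\langle z_m,y\rangle\to\langle x,y\rangle$ for every $y\in\ell_1$, i.e.\ $z_m\to x$ in $\sigma(\ell_\infty,\ell_1)$, so $x$ belongs to the weak-* closure of $B_{\mathcal R}$. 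This gives the equality of unit balls and hence that $\mathcal R^{**}=(\ell_\infty,|||\cdot|||)$ isometrically, as claimed.

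The individual steps are routine; the one place demanding genuine care is the passage $|||x^{(m)}|||\to|||x|||$ in the reverse inclusion, where the interchange of limits in the infinite sum—and the resulting control $\max\{1,|||x^{(m)}|||\}\to 1$ that keeps the rescaled truncations inside the ball—must be justified. The summability $\sum_n r_n\leq 2$ is exactly what powers both this dominated-convergence argument and the weak-* lower semicontinuity of the infinite sum used in the other inclusion.
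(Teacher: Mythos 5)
Your proof is correct and follows essentially the same route as the paper: Goldstine's theorem reduces everything to showing that $\{\bar x\in\ell_\infty\colon |||\bar x|||\leq 1\}$ is the weak$^*$-closure of $B_{\mathcal R}$, and you establish this via weak$^*$-closedness of that set together with weak$^*$-density through rescaled truncations, exactly as the paper does. The only differences are cosmetic: you obtain weak$^*$-lower semicontinuity of the extended norm directly as a supremum/sum of weak$^*$-continuous functions, where the paper argues sequentially with dominated convergence on the metrizable bounded sets, and your normalization $\max\{1,|||x^{(m)}|||\}$ neatly sidesteps the degenerate case $S_m\bar x=0$ implicit in the paper's rescaling factor $|||\bar x|||/|||S_m\bar x|||$.
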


\begin{proof}
Since $\mathcal R$ is $c_0$ in the equivalent norm $|||\cdot|||$, $\mathcal R^{**}$ should be  $\ell_\infty$ equipped with an equivalent norm $|||\cdot|||^{**}$. We want to demonstrate that  $(\ell_\infty, |||\cdot|||^{**}) = (\ell_\infty, |||\cdot|||)$. By Goldstine's theorem, the unit ball of  $\mathcal R^{**} = (\ell_\infty, |||\cdot|||^{**})$ is the weak$^*$-closure in $\ell_\infty$ of $B_{\mathcal R}$.  So, what remains to demonstrate is that the set $U:=\{\bar x \in \ell_\infty\colon |||\bar x||| \leq 1\}$ is equal to the  weak$^*$-closure in $\ell_\infty$ of $B_{\mathcal R}$. Recall that on bounded sets of  $\ell_\infty$,
the  weak$^*$-topology is metrizable (so  we can use the sequential language), and
the weak$^*$-convergence is just the coordinate-wise one. Let us demonstrate first that  $U$ is weak$^*$-closed, so $U$ will contain the weak$^*$-closure of $B_{\mathcal{R}}$, $B_\mathcal R^{**}$. Indeed, consider a sequence of $(\bar z_m)_{m\in \N}$ in $U$ with $w^*-\lim_m \bar z_m = \bar z \in \ell_\infty$. Since all the maps $\bar x \longmapsto \langle \bar x, v_n \rangle $ are weak$^*$-continuous on $\ell_\infty$, we have that $\lim_m \langle \bar z_m, v_n \rangle=  \langle \bar z, v_n \rangle$ for all $n \in \N$. Passing to a subsequence, we may (and do) assume that there exists $\lim_m  \|\bar z_m\|_\infty$ which satisfies that  $\lim_m  \|\bar z_m\|_\infty \geq \|\bar z\|_\infty$. Now,
\begin{align*}
|||\bar z||| &= \|\bar z\|_\infty + \sum_{n \in \N}r_n  \left|\langle \bar z, v_n \rangle \right| \leq   \lim_m\|\bar z_m\|_\infty + \sum_{n \in \N}r_n \lim_m  \left|\langle \bar  z_m, v_n \rangle\right|
\intertext{and using the version for series of Lebesgue's dominated convergence theorem}
&=  \lim_m\left(\|\bar z_m\|_\infty + \sum_{n \in \N}r_n   \left|\langle \bar z_m, v_n \rangle\right|\right) \leq 1,
\end{align*}
which demonstrates the desired  weak$^*$-closedness of $U$.

Let us show that $B_{\mathcal R}$ is weak$^*$-dense in $U$. Indeed, for every $\bar x = (x_1, x_2, \ldots) \in U \subset \ell_\infty$ and every $m\in \N$, denote $S_m\bar x = \sum_{k=1}^m x_ke_k \in c_0$. Then,  $w^*-\lim_m S_m \bar x = \bar x$ and, consequently, $\lim_m \langle S_m\bar x, v_n \rangle=  \langle \bar x, v_n \rangle$ for all $n \in \N$. Since $\lim_m \|S_m \bar x\|_\infty = \|\bar x\|_\infty$, another application of Lebesgue's dominated convergence theorem gives us
$$
 |||\bar x||| =  \lim_m  |||S_m\bar x|||.
$$
Now, consider $x_m = \frac{|||\bar x|||}{|||S_m\bar x|||}S_m\bar x$ for $m\in \N$. We have that $x_m  \in c_0$, $ |||x_m||| =  |||\bar x||| \leq 1$, so $x_m \in B_{\mathcal R}$ for every $m\in \N$. At the same time, $w^*-\lim_m x_m = w^*-\lim_m S_m \bar x = \bar x$. This completes the demonstration of the weak$^*$-density of $B_{\mathcal R}$ in $U$, and thus the proof of the theorem.
\end{proof}

From the proof above, we may extract the following property of Read's norm which we will use later and which is consequence of Lebesgue's dominated convergence theorem for series.

\begin{remark}\label{remark-Readnorm}
Let $(\bar z_m)_{m\in \N}$ be a sequence in $\ell_\infty$ which weakly$^*$-converges to $\bar z\in \ell_\infty$. Then, there exists $\lim_m |||\bar z_m|||$ if and only if there exists $\lim_m \|\bar z_m\|_\infty$. Besides,
$\lim_m |||\bar z_m|||=|||\bar z|||$ if and only if $\lim_m \|\bar z_m\|_\infty=\|\bar z\|_\infty$.
\end{remark}

Let us note that Proposition \ref{prop-bidualnorm} is a particular case of the following general result which is a consequence of the ``principle of local reflexivity for operators'' \cite{Behrends} and which has been suggested to us by the referee.

\begin{prop}\label{prop-bidual-abstract-version}
Let $X$, $Y$ be Banach spaces and let $R:X\longrightarrow Y$ be a bounded linear operator. Define a (equivalent) norm on $X$ by
$$
|||x|||=\|x\|_{X} + \|Rx\|_{Y}
$$
for every $x\in X$. Then the norm of $(X,|||\cdot|||)^{**}$ is given by the formula
$$
|||x^{**}|||=\|x^{**}\|_{X^{**}} + \|R^{**}x^{**}\|_{Y^{**}}
$$
for every $x^{**}\in X^{**}$ ($R^{**}$ denotes the biconjugate operator of $R$).
\end{prop}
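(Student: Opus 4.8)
The plan is to realize $|||\cdot|||$ as the restriction of an $\ell_1$-sum norm and then exploit that taking biduals commutes with this construction. Let $X\oplus_1 Y$ denote the $\ell_1$-direct sum, whose norm is $\|(x,y)\|=\|x\|_X+\|y\|_Y$, and define the linear map $T\colon (X,|||\cdot|||)\longrightarrow X\oplus_1 Y$ by $Tx=(x,Rx)$. Then $\|Tx\|_{X\oplus_1 Y}=\|x\|_X+\|Rx\|_Y=|||x|||$, so $T$ is an isometric embedding, whose image is the graph of $R$.

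First I would pass to biduals. Since $(X\oplus_1 Y)^*=X^*\oplus_\infty Y^*$ and $(X^*\oplus_\infty Y^*)^*=X^{**}\oplus_1 Y^{**}$ (duality of finite $\ell_1$- and $\ell_\infty$-sums), there is a canonical isometric identification $(X\oplus_1 Y)^{**}=X^{**}\oplus_1 Y^{**}$. Moreover, the bidual of an isometric embedding is again an isometric embedding: the adjoint $T^*$ of an isometric embedding is a metric surjection (by Hahn--Banach), and the adjoint of a metric surjection is an isometric embedding, so $T^{**}=(T^*)^*$ is isometric. Consequently $T^{**}$ embeds $(X,|||\cdot|||)^{**}$ isometrically into $X^{**}\oplus_1 Y^{**}$, and therefore $|||x^{**}|||=\|T^{**}x^{**}\|_{X^{**}\oplus_1 Y^{**}}$ for every $x^{**}$.

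It then remains to compute $T^{**}$ coordinate-wise. Let $P_1\colon X\oplus_1 Y\to X$ and $P_2\colon X\oplus_1 Y\to Y$ be the coordinate projections, so that $P_1T$ is the formal identity from $(X,|||\cdot|||)$ onto $X$ and $P_2T=R$. Using the functoriality $(ST)^{**}=S^{**}T^{**}$ together with the fact that, under the identification above, $P_1^{**}$ and $P_2^{**}$ are the coordinate projections of $X^{**}\oplus_1 Y^{**}$, the two coordinates of $T^{**}x^{**}$ are seen to be $x^{**}$ (via the canonical identification of $(X,|||\cdot|||)^{**}$ with $X^{**}$) and $R^{**}x^{**}$. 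Hence $T^{**}x^{**}=(x^{**},R^{**}x^{**})$, and substituting into the formula of the previous paragraph gives $|||x^{**}|||=\|x^{**}\|_{X^{**}}+\|R^{**}x^{**}\|_{Y^{**}}$, as desired.

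The main obstacle is purely bookkeeping: one has to fix the canonical identifications $(X,|||\cdot|||)^{**}\equiv X^{**}$ and $(X\oplus_1 Y)^{**}\equiv X^{**}\oplus_1 Y^{**}$ consistently, so that the coordinate projections of the bidual sum genuinely coincide with the biduals of the coordinate projections; once this is arranged the algebra is immediate and no appeal to the principle of local reflexivity is needed. As an alternative to the coordinate computation, one can identify the image $T^{**}\big((X,|||\cdot|||)^{**}\big)$ with the weak$^*$-closure in $X^{**}\oplus_1 Y^{**}$ of the graph of $R$; since $R^{**}$ is weak$^*$-to-weak$^*$ continuous and $X$ is weak$^*$-dense in $X^{**}$ by Goldstine's theorem, this closure is exactly the graph of $R^{**}$, which again yields the stated formula.
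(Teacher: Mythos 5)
Your proof is correct, and it takes a genuinely different route from the one in the paper. The paper argues directly with unit balls: writing $B$ for the unit ball of $(X,|||\cdot|||)$ and $A$ for the claimed bidual ball, it observes that $B_{(X,|||\cdot|||)^{**}}$ is the weak$^*$-closure of $B$ (Goldstine) and that $A$ is weak$^*$-closed, and then invokes the principle of local reflexivity \emph{for operators} \cite[Theorem 5.2]{Behrends} to find, for each $x_0^{**}$ with $\|x_0^{**}\|_{X^{**}}+\|R^{**}x_0^{**}\|_{Y^{**}}<1$ and each weak$^*$-neighborhood $U$, a point $x_0\in B\cap U$ with $\|x_0\|_X$ and $\|Rx_0\|_Y$ controlled up to $\delta/2$; this yields $A_0\subset\overline{B}^{w^*}$ and hence $A=\overline{B}^{w^*}$. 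You instead factor the norm through the graph embedding $Tx=(x,Rx)$ into $X\oplus_1 Y$ and reduce everything to standard duality: $(X\oplus_1 Y)^{**}=X^{**}\oplus_1 Y^{**}$, functoriality of second adjoints, and the Hahn--Banach fact that the adjoint of an into isometry is a metric surjection, whence $T^{**}$ is again an into isometry with $T^{**}x^{**}=(x^{**},R^{**}x^{**})$. What each buys: your route is self-contained at the level of Hahn--Banach and Goldstine and avoids the local reflexivity machinery entirely (which the paper uses as a black box), and it isolates the conceptual reason the formula holds, namely that the weak$^*$-closure of the graph of $R$ is the graph of $R^{**}$; the paper's argument is shorter modulo Behrends' theorem and manipulates the balls directly. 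Two bookkeeping points you flag but should make fully explicit in a write-up: first, $R^{**}$ computed from $(X,|||\cdot|||)$ coincides with $R^{**}$ computed from $X$, since equivalent norms induce the same weak$^*$ topology on $X^{**}$ and the second adjoint is the unique weak$^*$-to-weak$^*$ continuous extension of $R$; second, in your alternative ending, the identity $T^{**}\bigl((X,|||\cdot|||)^{**}\bigr)=\overline{T(X)}^{w^*}$ is \emph{not} valid for arbitrary bounded operators --- it uses that $T^{**}$ is an into isometry, so that $T^{**}(B_{(X,|||\cdot|||)^{**}})$ is weak$^*$-compact and equals $\overline{T(B)}^{w^*}$, and the range is then weak$^*$-closed by Krein--Smulyan. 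With those details fixed, both proofs are complete and establish the same statement.
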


\begin{proof}
Write
\begin{align*}
  B & =\{x\in X\colon \|x\|_X + \|Rx\|_Y\leq 1\}, \\
  A & =\{x^{**}\in X^{**}\colon \|x^{**}\|_{X^{**}} + \|R^{**}x^{**}\|_{Y^{**}}\leq 1\}, \\
  A_0 & =\{x^{**}\in X^{**}\colon \|x^{**}\|_{X^{**}} + \|R^{**}x^{**}\|_{Y^{**}}< 1\}.
\end{align*}
We have to prove that $A$ coincides with the weak$^*$-closure of $B$. It is clear that $A$ contains the weak$^*$-closure of $B$ since it contains $B$ and it is weak$^*$-closed by the weak$^*$ lower semicontinuity of the norm. Besides, it is enough to prove that the weak$^*$-closure of $B$ contains $A_0$. Therefore, we fix $x_0^{**}\in A_0$, we write $\delta=1-(\|x_0^{**}\|_{X^{**}} + \|R^{**}x_0^{**}\|_{Y^{**}})>0$, and we fix a weak$^*$ neighborhood $U$ of $x_0^{**}$ which we may suppose that is of the form
$$
U=\{z^{**}\in X^{**}\colon |\langle f, z^{**}-x_0^{**}\rangle|<\gamma \ \forall f\in D\}
$$
for suitable $D\subset S_{X^*}$ finite and $\gamma>0$. We can now apply the principle of local reflexivity for operators \cite[Theorem 5.2]{Behrends} to get a point $x_0\in X$ satisfying that
$$
\|x_0\|_{X}\leq \|x^{**}\|_{X^{**}}+\delta/2, \quad \|Rx_0\|_{Y}\leq \|R^{**}x_0^{**}\|_{Y^{**}}+\delta/2,\quad \text{and} \quad \langle x_0,f\rangle=\langle f,x_0^{**}\rangle\ \ \forall f\in D.
$$
We clearly have that $x_0\in B\cap U$, finishing the proof.
\end{proof}

We are now ready to present the strict convexity of the bidual of
$\mathcal{R}$. Recall that a Banach space $X$ is said to be
\emph{strictly convex} if $S_X$ does not contain any non-trivial
segment or, equivalently, if $\|x+y\|<2$ whenever $x,y\in B_X$,
$x\neq y$.

\begin{theorem} \label{theor-str-convR**}
The bidual space $\mathcal R^{**}$ of Read's space is strictly convex.
\end{theorem}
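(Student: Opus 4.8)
The plan is to work in the concrete model furnished by Proposition~\ref{prop-bidualnorm}, identifying $\mathcal R^{**}$ with $(\ell_\infty,|||\cdot|||)$, where $|||\bar x|||=\|\bar x\|_\infty+\sum_{n\in\N}r_n|\langle\bar x,v_n\rangle|$. To check strict convexity it suffices, using the equivalent formulation recalled just before the statement, to fix $\bar x,\bar y\in\ell_\infty$ with $|||\bar x|||=|||\bar y|||=1$ and $|||\bar x+\bar y|||=2$ and to prove that $\bar x=\bar y$.

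First I would exploit that the norm is the sum of the seminorm $\|\cdot\|_\infty$ and the terms $r_n|\langle\cdot,v_n\rangle|$, each satisfying the triangle inequality, with every weight $r_n=2^{-a_n^2}\|u_n-e_{a_n}\|_1$ strictly positive. Estimating $|||\bar x+\bar y|||\le|||\bar x|||+|||\bar y|||$ term by term, the assumed equality $|||\bar x+\bar y|||=2$ forces equality in every single term. In particular, for each $n\in\N$ one gets $|\langle\bar x+\bar y,v_n\rangle|=|\langle\bar x,v_n\rangle|+|\langle\bar y,v_n\rangle|$, i.e.\ $\langle\bar x,v_n\rangle$ and $\langle\bar y,v_n\rangle$ share the same sign, $\langle\bar x,v_n\rangle\langle\bar y,v_n\rangle\ge 0$.

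The decisive step is to upgrade this sign condition from the sequence $(v_n)$ to all of $\ell_1$, and here the density of $(v_n)$ in $S_{\ell_1}$ is exactly what is needed: given $v\in S_{\ell_1}$, choose a subsequence $v_{n_k}\to v$ in $\ell_1$; since $\bar x,\bar y\in\ell_\infty$, the pairings satisfy $\langle\bar x,v_{n_k}\rangle\to\langle\bar x,v\rangle$ and $\langle\bar y,v_{n_k}\rangle\to\langle\bar y,v\rangle$, so passing to the limit yields $\langle\bar x,v\rangle\langle\bar y,v\rangle\ge 0$ for every $v\in S_{\ell_1}$, hence for every $v\in\ell_1$ by homogeneity.

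Finally I would argue that two functionals on $\ell_1$ whose values always have the same sign must be nonnegative multiples of one another. Since $|||\bar y|||=1$ we have $\bar y\ne 0$, so I fix $v_0\in\ell_1$ with $\langle\bar y,v_0\rangle=1$; a short perturbation argument shows $\ker\bar y\subseteq\ker\bar x$ (otherwise a vector $v=w+tv_0$ with $w\in\ker\bar y$, $\langle\bar x,w\rangle\ne 0$, and $t$ small of suitable sign would violate the sign condition), whence $\bar x=\lambda\bar y$ for some $\lambda\in\R$, and testing against $v_0$ gives $\lambda\ge 0$. Evaluating the norm then yields $1=|||\bar x|||=\lambda\,|||\bar y|||=\lambda$, so $\lambda=1$ and $\bar x=\bar y$. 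I expect this proportionality/perturbation lemma to be the only genuinely delicate point; it is pleasant to note that the equality case of the $\|\cdot\|_\infty$ part is never used, since the $\ell_1$-type tail alone, thanks to the density of $(v_n)$, already pins down the common direction of $\bar x$ and $\bar y$.
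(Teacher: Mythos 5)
Your proof is correct and is essentially the paper's own argument run in the contrapositive direction: both rest on the additive form of the norm \eqref{eq-norm2} together with the density of $(v_n)_{n\in\N}$ in $S_{\ell_1}$, the paper exhibiting a single index $k$ with $\langle\bar x,v_k\rangle\langle\bar y,v_k\rangle<0$ to force $|||\bar x+\bar y|||<2$ when $\bar x\neq\bar y$, while you extract $\langle\bar x,v\rangle\langle\bar y,v\rangle\geq 0$ for all $v\in\ell_1$ from the equality case and conclude via proportionality. Your perturbation lemma (same-sign functionals on $\ell_1$ are nonnegatively proportional) is precisely the justification the paper leaves implicit in its one-line claim that $\bar x\neq\bar y$ yields such a $k$, so your write-up is, if anything, slightly more complete.
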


\begin{proof}
Let  $\bar x, \bar y \in S_{\mathcal R^{**}}$, $\bar x \neq \bar y$. Our goal is to demonstrate that $|||\bar x+\bar y||| < 2$. Let us use the fact that the sequence $(v_n)_{n\in N}$ from the formula \eqref{eq-norm2} is dense in $S_{\ell_1}$. This implies the existence of $k \in \N$ such that the values of $ \langle \bar x, v_k \rangle$ and $ \langle \bar y, v_k \rangle$ are non-zero and of opposite signs. Then,
$$
\left|\langle \bar x + \bar y, v_k \rangle \right| <   \left|\langle \bar x, v_k \rangle\right| + \left|\langle \bar y, v_k \rangle\right|.
$$
On the other hand, the triangle inequality says that
$$
\|\bar x+ \bar y\|_\infty  \leq \|\bar x\|_\infty + \|\bar y\|_\infty \qquad \text{and} \qquad  \left|\langle \bar x + \bar y, v_n \rangle \right| \leq  \left|\langle \bar x, v_n \rangle\right| + \left|\langle \bar y, v_n \rangle\right|
$$
for all $n \in \N\setminus \{k\}$ . Combining all these inequalities with the formula \eqref{eq-norm2}, we obtain the desired estimate:
\begin{align*}
|||\bar x + \bar y||| &= \|\bar x + \bar y\|_\infty + \sum_{n \in \N}r_n\left|\langle \bar x + \bar y, v_n \rangle\right| \\ &
<  \|\bar x\|_\infty + \sum_{n \in \N}r_n \left|\langle \bar x, v_n \rangle\right| +  \|\bar y\|_\infty + \sum_{n \in \N}r_n \left|\langle \bar y, v_n \rangle\right| = 2.\qedhere
\end{align*}
\end{proof}

As an immediate consequence, $\mathcal{R}^*$ is \emph{smooth}, i.e.\ its norm is G\^{a}teaux differentiable at every non-zero element (see \cite[Fact 8.12]{FHHMPZ}, for instance).

\begin{corollary}\label{corollary-dual-smooth}
The dual space $\mathcal{R}^*$ of Read's space is smooth.
\end{corollary}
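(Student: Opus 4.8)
The plan is to deduce the smoothness of $\mathcal{R}^*$ directly from the strict convexity of its dual $\mathcal{R}^{**}$, established in Theorem \ref{theor-str-convR**}, via the standard duality between these two properties. Concretely, I would invoke the general fact that if the dual of a Banach space is strictly convex, then the space itself is smooth (this is \cite[Fact 8.12]{FHHMPZ}). Applying this to the space $\mathcal{R}^*$, whose dual is precisely $\mathcal{R}^{**}$, yields the conclusion immediately.

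Should one prefer a self-contained argument, I would proceed as follows. Smoothness of $\mathcal{R}^*$ amounts to showing that every $f \in S_{\mathcal{R}^*}$ admits a unique norming functional in $S_{\mathcal{R}^{**}}$, that is, a unique $\bar x \in S_{\mathcal{R}^{**}}$ with $\langle \bar x, f\rangle = 1$. Suppose, towards a contradiction, that $\bar x, \bar y \in S_{\mathcal{R}^{**}}$ both satisfy $\langle \bar x, f \rangle = \langle \bar y, f \rangle = 1$ with $\bar x \neq \bar y$. Then $\langle \bar x + \bar y, f\rangle = 2$ together with $\|f\| = 1$ forces $|||\bar x + \bar y||| \geq 2$, while the triangle inequality gives $|||\bar x + \bar y||| \leq 2$; hence $|||\bar x + \bar y||| = 2$ with $\bar x, \bar y \in S_{\mathcal{R}^{**}}$ distinct, contradicting the strict convexity of $\mathcal{R}^{**}$ established in Theorem \ref{theor-str-convR**}.

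There is essentially no hard step here: the corollary is an immediate consequence of the preceding theorem together with a textbook duality result, and the only point requiring care is the \emph{direction} of the implication. Smoothness of a space is obtained from strict convexity of its \emph{dual} (and not of its predual), so the duality must be applied with $\mathcal{R}^*$ playing the role of the underlying space and $\mathcal{R}^{**}$ its dual; the reverse direction (strict convexity from smoothness of the dual) is a distinct statement and is in fact what underlies assertion (c) of the introduction.
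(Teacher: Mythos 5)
Your proposal is correct and takes exactly the paper's route: the paper also obtains the corollary as an immediate consequence of Theorem~\ref{theor-str-convR**} via the standard duality fact \cite[Fact 8.12]{FHHMPZ} that strict convexity of the dual implies smoothness of the space, applied with $\mathcal{R}^*$ as the underlying space and $\mathcal{R}^{**}$ as its dual. Your optional self-contained argument (uniqueness of norming functionals in $S_{\mathcal{R}^{**}}$) is a sound unpacking of that same fact, and your remark about the direction of the duality is exactly the right point of care.
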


However, the norm of $\mathcal{R}^*$ cannot be Fr\'{e}chet differentiable at any point, as we may show that it is rough. A norm of a Banach space is said to be $\eps$-\emph{rough} ($\eps>0$) if
$$
\limsup_{\|h\|\to 0}\frac{\|x+h\|+\|x-h\|-2\|x\|}{\|h\|}\geq \eps
$$
for every $x\in X$. The norm is said to be \emph{rough} if it is $\eps$-rough for some $\eps\in (0,2]$. We refer the reader to the classical book \cite{D-G-Z} on smoothness and renorming for more information and background. Clearly, a rough norm is not Fr\'{e}chet differentiable at any point.

\begin{theorem}\label{thm-rough}
The norm of the dual space $\mathcal{R}^*$ of Read's space is $2/3$-rough.
\end{theorem}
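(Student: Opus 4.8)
The plan is to reduce the roughness of $|||\cdot|||_*$, the norm of $\mathcal R^*$, to a lower bound on the diameters of the slices of the predual ball $B_{\mathcal R}$; this is a short duality argument that I would include directly. For $f\in S_{\mathcal R^*}$ and $\alpha>0$ write $S(f,\alpha)=\{x\in B_{\mathcal R}\colon \langle x,f\rangle>1-\alpha\}$. I claim that if $\inf_{\alpha>0}\operatorname{diam} S(f,\alpha)\geq \varepsilon$ for every $f\in S_{\mathcal R^*}$, then $|||\cdot|||_*$ is $\varepsilon$-rough (note that, by the $0$-homogeneity of the difference quotient, it suffices to treat $f\in S_{\mathcal R^*}$). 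Arguing by contraposition, if the norm is not $\varepsilon$-rough at some $f$ there is $\beta>0$ with $|||f+h|||_*+|||f-h|||_*-2\leq\varepsilon|||h|||_*$ whenever $|||h|||_*\leq\beta$. For arbitrary $x,y\in S(f,\alpha)$ and any such $h$, adding $\langle x,f+h\rangle\leq|||f+h|||_*$ and $\langle y,f-h\rangle\leq|||f-h|||_*$ gives $\langle x+y,f\rangle+\langle x-y,h\rangle\leq 2+\varepsilon|||h|||_*$; since $\langle x+y,f\rangle>2-2\alpha$ this yields $\langle x-y,h\rangle<2\alpha+\varepsilon|||h|||_*\le 2\alpha+\varepsilon\beta$. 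Taking the supremum over $|||h|||_*\leq\beta$ and using $x-y\in\mathcal R$ gives $\beta|||x-y|||\leq 2\alpha+\varepsilon\beta$, so $\operatorname{diam}S(f,\alpha)\leq\varepsilon+2\alpha/\beta$; letting $\alpha\to0$ contradicts the hypothesis.

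It then remains to prove that every slice of $B_{\mathcal R}$ has diameter at least $2/3$. The crucial elementary input is that, by \eqref{eq-equivalent}, each $x\in S_{\mathcal R}$ satisfies $1=|||x|||\leq 3\|x\|_\infty$, hence $\|x\|_\infty\geq 1/3$. Fixing $f\in S_{\mathcal R^*}$ and $\alpha>0$, I would first choose a finitely supported $x\in S_{\mathcal R}$ with $\langle x,f\rangle>1-\alpha/2$ (possible since $c_{00}$ is dense in $\mathcal R$, after normalising), and set $s=\|x\|_\infty\in[1/3,1]$. The idea is to modify $x$ at a single coordinate $m$ chosen beyond $\supp x$, producing two points of the slice whose difference is $2s\,e_m$.

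To control the perturbation I would use $\epsilon_m:=\sum_n r_n|\langle e_m,v_n\rangle|$, which satisfies $\sum_m\epsilon_m=\sum_n r_n\|v_n\|_1=\sum_n r_n\leq 2$, so $\epsilon_m\to0$, and which gives $|||e_m|||=1+\epsilon_m$. For $y_\pm:=x\pm s\,e_m$ one checks $\|y_\pm\|_\infty=\max(\|x\|_\infty,s)=s$ (as $x_m=0$) and $\sum_n r_n|\langle y_\pm,v_n\rangle|\leq(1-s)+s\epsilon_m$, whence $|||y_\pm|||\leq 1+s\epsilon_m=:\lambda$. Setting $\tilde y_\pm:=y_\pm/\lambda$ then yields two points of $B_{\mathcal R}$ with $|||\tilde y_+-\tilde y_-|||=\tfrac{2s}{\lambda}|||e_m|||=2s\,\tfrac{1+\epsilon_m}{1+s\epsilon_m}\geq 2s\geq 2/3$, where the last inequality uses $s\leq 1$. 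Finally, from $\langle y_\pm,f\rangle\geq\langle x,f\rangle-s|f_m|\geq 1-\alpha/2-|f_m|$ together with $f_m\to0$ (since $f\in\ell_1$), one gets $\langle\tilde y_\pm,f\rangle>1-\alpha$ for all large $m$, so both $\tilde y_\pm$ lie in $S(f,\alpha)$. Hence $\operatorname{diam}S(f,\alpha)\geq 2/3$ for every $\alpha$, and combined with the first paragraph this establishes $2/3$-roughness at every $f$.

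The step I expect to require the most care is keeping the two perturbed points simultaneously inside $B_{\mathcal R}$ and inside the same slice while forcing their mutual distance up to $2/3$: normalising by the \emph{common} factor $\lambda$ is what lets both inclusions survive at once, and the exact constant $2/3$ is pinned down by the two simple but essential facts $\|x\|_\infty\geq 1/3$ on $S_{\mathcal R}$ and $|||e_m|||=1+\epsilon_m\to 1$. By contrast, the duality reduction in the first paragraph is routine once it is set up, so the genuine content of the proof lies in this construction.
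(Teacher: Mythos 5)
Your proof is correct, and it implements the strategy that the paper itself only points to in a remark: the equivalence (see \cite[Proposition I.1.11]{D-G-Z}) between $\eps$-roughness of the dual norm and all slices of $B_{\mathcal{R}}$ having diameter at least $\eps$. The execution, however, is genuinely different from the paper's. The paper argues directly on the dual: it picks $x_0$ almost norming a given $x_0^*\in S_{\mathcal{R}^*}$ with $|||x_0|||<1-\lambda\delta$, shows $|||x_0\pm\rho\, e_N|||\leq 1$ for large $N$ by soft-analysis tools --- Lemma~\ref{conorm} (the $(m_\infty)$-type formula for the sup norm, proved via pointwise limits of Lipschitz functions) combined with Remark~\ref{remark-Readnorm} (dominated convergence for the Read norm along weak$^*$-convergent sequences) --- and then tests $x_0^*\pm\lambda y^*$, with $y^*$ norming $e_N$, against these two points. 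You instead prove the needed direction of the duality inline and obtain the slice bound by a purely quantitative device the paper never uses: $\epsilon_m=\sum_n r_n|\langle e_m,v_n\rangle|$ satisfies $\sum_m\epsilon_m=\sum_n r_n\leq 2$, hence $\epsilon_m\to 0$, which yields $|||x\pm s\,e_m|||\leq 1+s\epsilon_m$ by direct computation and lets you renormalize both perturbed points by the common factor $\lambda$. This avoids Lemma~\ref{conorm}, Remark~\ref{remark-Readnorm}, and any bidual considerations altogether, so your second paragraph is more elementary and self-contained than the paper's route; the price is that you must re-prove the slice/roughness duality, which the paper absorbs into its single direct estimate. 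Both arguments hinge on the same two facts --- $\|x\|_\infty\geq\frac13|||x|||$ from \eqref{eq-equivalent}, and $|||e_m|||\geq 1$ with $|||e_m|||\to 1$ --- which is why the constant $2/3$ comes out identically.

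One small repair is needed in your duality paragraph. From ``not $\eps$-rough at $f$'' you recorded only the non-strict consequence $|||f+h|||_*+|||f-h|||_*-2\leq\eps|||h|||_*$, which leads to $\operatorname{diam}S(f,\alpha)\leq\eps+2\alpha/\beta$ and hence $\inf_{\alpha}\operatorname{diam}S(f,\alpha)\leq\eps$; this does \emph{not} contradict your hypothesis $\inf_{\alpha}\operatorname{diam}S(f,\alpha)\geq\eps$. Keep the strict slack: $\limsup<\eps$ gives some $\eps'<\eps$ and $\beta>0$ with the quotient at most $\eps'$ for all $0<|||h|||_*\leq\beta$, whence $\inf_{\alpha}\operatorname{diam}S(f,\alpha)\leq\eps'<\eps$, a genuine contradiction. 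With that one-line fix, everything checks.
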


We need a formula for the supremum norm of $c_0$ which is well-known. It follows from the fact that $c_0$ has a property related to $M$-ideals called $(m_\infty)$ and which was deeply studied by N~.Kalton and D.~Werner in \cite{KaltonWerner}. We include a simple proof of a slightly more general result here for the sake of completeness.

\begin{lemma}\label{conorm}
Assume that $(u_m)_{m\in \N}$ is a weakly$^*$-null sequence in
$\ell_\infty$ such that the limit $\lim_m\Vert u_m\Vert_{\infty}$ exists.
Then, for every $u\in c_0$
$$
\lim_m \Vert u+u_m\Vert_{\infty} =\max\bigl\{\Vert u\Vert_{\infty},\ \lim_{m}\Vert
u_m\Vert_{\infty}\bigr\}.
$$
\end{lemma}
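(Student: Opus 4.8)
The plan is to set $L := \lim_m \|u_m\|_\infty$ and $M := \max\{\|u\|_\infty, L\}$, and to establish separately that $\limsup_m \|u+u_m\|_\infty \leq M$ and $\liminf_m \|u+u_m\|_\infty \geq M$. The only feature of $(u_m)$ I intend to use is that weak$^*$-nullity forces coordinatewise nullity: testing against each $e_k \in \ell_1$ gives $\lim_m u_{m,k} = 0$ for every fixed $k$, and hence $\lim_m \max_{1\leq k\leq N}|u_{m,k}| = 0$ for every fixed $N\in\N$. The complementary ingredient is that $u\in c_0$, so for every $\eps>0$ there is $N$ with $|u_k|<\eps$ for all $k>N$. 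The whole argument is a matter of splitting the supremum $\|u+u_m\|_\infty = \sup_k |u_k+u_{m,k}|$ into an initial block $k\leq N$ and a tail $k>N$.

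For the upper bound I would fix $\eps>0$ with the corresponding $N$ and split accordingly. On the finitely many coordinates $k\leq N$ one has $|u_k+u_{m,k}|\leq \|u\|_\infty + \max_{k\leq N}|u_{m,k}|$, which is at most $\|u\|_\infty+\eps\leq M+\eps$ once $m$ is large, by coordinatewise nullity. On the tail $k>N$ one bounds $|u_k+u_{m,k}|\leq |u_k|+|u_{m,k}| < \eps + \|u_m\|_\infty \leq M+2\eps$ for large $m$, since $\|u_m\|_\infty\to L\leq M$. Taking the supremum over $k$ and then the limit superior gives $\limsup_m \|u+u_m\|_\infty \leq M+2\eps$, and letting $\eps\to 0$ yields the desired upper bound.

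The lower bound decomposes into $\liminf_m\|u+u_m\|_\infty\geq \|u\|_\infty$ and $\liminf_m\|u+u_m\|_\infty\geq L$. The first is immediate: since $u\in c_0$ its supremum norm is attained at some coordinate $k_0$, and $|u_{k_0}+u_{m,k_0}|\to |u_{k_0}|=\|u\|_\infty$ because $u_{m,k_0}\to 0$. The inequality $\liminf\geq L$ is the heart of the matter and the step I expect to be the main obstacle, as it is exactly where the interplay between $u\in c_0$ and the weak$^*$-nullity of $(u_m)$ is essential. The intuition is that a coordinate nearly achieving $\|u_m\|_\infty$ cannot remain trapped in a fixed initial block but must escape to high indices, precisely where $u$ is small.

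To make this precise (assuming $L>0$, the case $L=0$ being trivial), I would fix $\eps\in(0,L)$ with $N$ as above and, for each $m$, choose $k_m$ with $|u_{m,k_m}|>\|u_m\|_\infty-\eps$. Since $\max_{k\leq N}|u_{m,k}|\to 0$ while $|u_{m,k_m}|>\|u_m\|_\infty-\eps\to L-\eps>0$, for all large $m$ the near-maximizing value exceeds $\max_{k\leq N}|u_{m,k}|$, which forces $k_m>N$. But then $|u_{k_m}|<\eps$, so $\|u+u_m\|_\infty \geq |u_{k_m}+u_{m,k_m}| \geq |u_{m,k_m}|-|u_{k_m}| > \|u_m\|_\infty-2\eps$, and taking the limit inferior gives $\liminf_m\|u+u_m\|_\infty\geq L-2\eps$; letting $\eps\to 0$ concludes. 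Combining the two bounds shows that the limit exists and equals $M=\max\{\|u\|_\infty, L\}$.
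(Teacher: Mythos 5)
Your proof is correct, and it takes a genuinely different route from the paper's. The paper proceeds in two steps: it first proves the identity exactly for finitely supported $u$, by writing $u_m = Pu_m + (u_m - Pu_m)$ with $P$ the projection onto the initial block containing $\supp u$, noting that $\|Pu_m\|_\infty \to 0$ (finitely many coordinates, each tending to zero) and that $u$ and $u_m - Pu_m$ have disjoint supports, so that $\|u + (u_m - Pu_m)\|_\infty = \max\{\|u\|_\infty, \|u_m - Pu_m\|_\infty\}$ holds \emph{exactly}; it then extends to all of $c_0$ by a soft argument: the functions $f_m(u) = \|u + u_m\|_\infty$ are $1$-Lipschitz and converge pointwise on the dense set $c_{00}$, hence converge pointwise on all of $c_0$ to the Lipschitz extension of the limit. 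You instead run a direct squeeze on an arbitrary $u \in c_0$: the upper bound $\limsup_m \|u+u_m\|_\infty \leq M$ by splitting coordinates at the threshold $N$ where $|u_k| < \eps$, and the lower bound via the two estimates $\liminf \geq \|u\|_\infty$ (evaluate at a norm-attaining coordinate of $u$) and $\liminf \geq L$ (your ``escape'' argument: a coordinate $k_m$ nearly attaining $\|u_m\|_\infty$ must eventually satisfy $k_m > N$, since the initial block tends to zero uniformly, and there $|u_{k_m}| < \eps$). All steps check out, including the choice $\eps \in (0,L)$ needed to force the escape and the trivial disposal of the case $L=0$. What each approach buys: yours is fully self-contained and makes explicit exactly where the two hypotheses interact (the escape step is the quantitative core that the paper's disjoint-support trick handles structurally); the paper's version is more modular, reducing the analytic content to the trivially exact finite-support case and delegating the passage to general $u$ to a reusable equi-Lipschitz density argument, which is the standard pattern behind the $(m_\infty)$-property of $c_0$ cited there.
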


\begin{proof} Suppose first that $u$ has finite support. Let $n\in\N$ be an integer such that all non-zero coordinates of $u$ are smaller than $n$ and denote by $P$ the projection of $\ell_\infty$ on the first $n$ coordinates. Then, $(Pu_m)_{m\in\N}$ go coordinate-wise to zero, but there are only $n$ non-null coordinates, so actually $(Pu_m)_{m\in \N}$ go to zero in norm. With that in mind, we get that
\begin{align*}
\lim_m\|u + u_m\|_\infty & = \lim_m\|u + (u_m - Pu_m)\|_\infty =  \lim_m \max\bigl\{\|u\|_\infty,\, \|u_m - Pu_m\|_\infty\bigr\} \\ &=  \max\bigl\{\|u\|_\infty,\,  \lim_m\|u_m\|_\infty\bigr\},
\end{align*}
where in the second equality we have used the disjointness of the supports of $u$ and $u_m - Pu_m$.

Now, let us define $f_m:c_0\longrightarrow \R$ by $f_m(u)=\|u+u_m\|_\infty$ for every $u\in c_0$ and every $m\in \N$. Then, all the functions $f_m$ are $1$-Lipschitz and the sequence $(f_m)_{m\in \N}$ converges pointwise on the dense set $c_{00}$ to a function which is a fortiori $1$-Lipschitz. It is now routine, using again that the Lipschitz constants of the $f_m$'s are uniformly bounded, to prove that $(f_m)_{m\in \N}$ converges pointwise on the whole $c_0$ to the unique extension of the limit on $c_{00}$.
\end{proof}

We are now ready to proof that the norm of $\mathcal{R}^*$ is rough.

\begin{proof}[Proof of Theorem~\ref{thm-rough}]
Fix $x_0^*\in S_{\mathcal{R}^*}$ and $\lambda\in (0,1)$. Given $\delta\in (0,1/3)$, we take $x_0\in B_{\mathcal{R}^*}$ such that
$$
x_0^*(x_0)>1-3\lambda\delta \qquad \text{and} \qquad |||x_0|||<1-\lambda\delta.
$$
Write $\rho=1/3-\lambda\delta$. We have that $\|x_0\|_\infty\geq \frac13 |||x_0|||\geq \rho$ (use \eqref{eq-equivalent}) and that the sequence $(\rho\, e_m)_{m\in \N}$ is weakly-null, so Lemma \ref{conorm} gives us that
$$
\lim_m \|x_0 \pm \rho\, e_m\|_\infty = \max\bigl\{\|x_0\|_\infty,\ \lim_m \|\rho\, e_m\|_\infty\bigr\}=\|x_0\|_\infty.
$$
It then follows from Remark~\ref{remark-Readnorm} that
$$
\lim_m |||x_0 \pm \rho\, e_m||| = |||x_0||| < 1-\lambda\delta.
$$
With all of these in mind, we may find $N\in \N$ such that
\begin{equation}\label{eq:rough1}
|||x_0 \pm \rho\, e_N|||\leq 1 \quad \text{and} \quad x_0^*(x_0\pm \rho \, e_N)>1-3\lambda\delta.
\end{equation}
Finally, take $y^*\in S_{\mathcal{R}^*}$ such that
$$
y^*(e_N)=|||e_N|||\geq 1.
$$
We have that
\begin{align*}
  |||x_0^* + \lambda y^*||| + |||x_0^*-\lambda y^*||| &\geq \langle x_0 + \rho\, e_N, x_0^*+\lambda y^*\rangle + \langle x_0 - \rho\, e_N, x_0^*-\lambda y^*\rangle \\
   & > 2 - 6\lambda\delta + 2\lambda\rho = 2 +\lambda\left(\frac{2}{3}-(6+\lambda)\delta \right).
\end{align*}
Summarizing, we have proved that for every $x^*\in S_{\mathcal{R}^*}$ and every $\lambda\in (0,1)$,
$$
\sup_{z^*\in \mathcal{R},\,|||z^*|||=\lambda}\frac{|||x^*+z^*||| + |||x^*-z^*|||-2}{\lambda} \geq \frac{2}{3}.
$$
This gives the $2/3$-roughness of $\mathcal{R}^*$, as desired.
\end{proof}

Observe that following the above proof until \eqref{eq:rough1}, we get that every slice of the unit ball of $\mathcal{R}$ has diameter greater than or equal to $2/3$. Actually, the $\eps$-roughness of the norm of the dual $X^*$ of a Banach space $X$ is equivalent to the fact that all slices of the unit ball of $X$ have diameter greater than or equal to $\eps$ \cite[Proposition~I.1.11]{D-G-Z}, and the second part of our proof is based in the proof of the result above. Let us also note that the first part of the proof of Theorem \ref{thm-rough} can be easily adapted to get that all weakly open subsets of $B_{\mathcal{R}}$ have diameter greater than or equal to $2/3$.

\begin{corollary}\label{Corollary-big-weak-open-subsets}
Every weakly open subset of the unit ball of Read's space $\mathcal{R}$ has diameter greater than or equal to $2/3$.
\end{corollary}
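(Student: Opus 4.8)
The plan is to adapt the first part of the proof of Theorem~\ref{thm-rough}: inside any nonempty relatively weakly open subset $W$ of $B_{\mathcal R}$ I will exhibit two points at distance exactly $2/3$, obtained by perturbing a well-chosen base point of $W$ in the direction of a far-out basis vector $e_N$. The whole point of looking ``far out'' is that $(e_N)$ is weakly null in $\mathcal R$ (its dual is $\ell_1$ and $f(e_N)\to 0$ for every $f\in\ell_1$), so the perturbed points stay inside any prescribed weak neighborhood once $N$ is large, while Lemma~\ref{conorm} keeps the $|||\cdot|||$-norm under control.

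First I would fix a nonempty relatively weakly open $W\subseteq B_{\mathcal R}$ and select a convenient base point. Since the open unit ball $\{x\in\mathcal R\colon |||x|||<1\}$ is convex and norm dense in $B_{\mathcal R}$, it is weakly dense in $B_{\mathcal R}$, and hence $W$ contains a point $x_0$ with $|||x_0|||<1$; this strict inequality will provide the room needed below. I then fix a basic weak neighborhood $V=\{x\in\mathcal R\colon |f_i(x-x_0)|<\eta,\ i=1,\dots,k\}$ of $x_0$ (with $f_1,\dots,f_k\in\mathcal R^*$) such that $V\cap B_{\mathcal R}\subseteq W$.

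Next I would perturb $x_0$ by $\pm\tfrac13 e_N$ and compute the limiting norm. Lemma~\ref{conorm} gives $\lim_N\|x_0\pm\tfrac13 e_N\|_\infty=\max\{\|x_0\|_\infty,\tfrac13\}$, and the series part converges to $T:=|||x_0|||-\|x_0\|_\infty=\sum_n r_n|\langle x_0,v_n\rangle|$ by the dominated convergence argument of Proposition~\ref{prop-bidualnorm} (the coordinates of each $v_n$ tend to $0$). Thus
$$
\lim_N |||x_0\pm\tfrac13 e_N||| = \max\{\|x_0\|_\infty,\tfrac13\} + T .
$$
The crucial check is that this limit is strictly below $1$. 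If $\|x_0\|_\infty\ge\tfrac13$ the limit equals $|||x_0|||<1$. If $\|x_0\|_\infty<\tfrac13$, then since $|\langle x_0,v_n\rangle|\le\|x_0\|_\infty$ and $\sum_n r_n\le 2$ one gets $T\le 2\|x_0\|_\infty$, so the limit is at most $\tfrac13+2\|x_0\|_\infty<1$. In either case the limit is $<1$, so there is $N_0$ with $|||x_0\pm\tfrac13 e_N|||\le 1$ for all $N\ge N_0$, i.e.\ both perturbed points lie in $B_{\mathcal R}$.

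Finally, because $x_0\pm\tfrac13 e_N\to x_0$ weakly, enlarging $N$ further we also have $x_0\pm\tfrac13 e_N\in V$; hence both points lie in $V\cap B_{\mathcal R}\subseteq W$, and their mutual distance is $|||\tfrac23 e_N|||=\tfrac23|||e_N|||\ge\tfrac23\|e_N\|_\infty=\tfrac23$ by \eqref{eq-equivalent}. This yields $\operatorname{diam}(W)\ge 2/3$. I expect the only genuine obstacle to be the boundedness-in-the-ball step, which is exactly where the two regimes $\|x_0\|_\infty\ge\tfrac13$ and $\|x_0\|_\infty<\tfrac13$ have to be separated; the delicate second regime is precisely what is tamed by the estimate $T\le 2\|x_0\|_\infty$ coming from $\sum_n r_n\le 2$.
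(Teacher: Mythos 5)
Your proof is correct, and it follows the route the paper itself indicates: the paper gives no standalone proof of this corollary, only the remark that ``the first part of the proof of Theorem~\ref{thm-rough} can be easily adapted'', and your argument is exactly such an adaptation (perturbation of a base point by $\pm\tfrac13 e_N$, Lemma~\ref{conorm} plus the dominated-convergence computation to control $|||x_0\pm\tfrac13 e_N|||$, weak nullity of $(e_N)$ to stay inside the weak neighborhood). Where you genuinely diverge is at the one step the paper's sketch glosses over. In the proof of Theorem~\ref{thm-rough} the base point $x_0$ is chosen almost norming for $x_0^*$, which forces $\|x_0\|_\infty\geq\tfrac13|||x_0|||\geq\rho$, so the perturbation size never exceeds the sup-norm and the limit of the norms is simply $|||x_0|||<1$; in a general relatively weakly open set no such functional is available, and the intended adaptation would presumably move the base point within $W$ (e.g.\ along a direction in $\bigcap_i\ker f_i$, using that weak neighborhoods contain finite-codimensional affine sections of the ball) to make $|||x_0|||$ close to $1$ and recover $\|x_0\|_\infty\geq\tfrac13$. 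You instead keep an arbitrary base point with $|||x_0|||<1$ and absorb the problematic regime $\|x_0\|_\infty<\tfrac13$ by the estimate $T=\sum_n r_n|\langle x_0,v_n\rangle|\leq 2\|x_0\|_\infty$ (from $\|v_n\|_1=1$ and $\sum_n r_n\leq 2$), giving $\lim_N|||x_0\pm\tfrac13 e_N|||\leq\tfrac13+2\|x_0\|_\infty<1$. This case analysis is a self-contained and arguably cleaner completion: it avoids the auxiliary repositioning step at the cost of one extra (elementary) estimate specific to Read's norm, whereas the repositioning argument is softer and would work for any equivalent norm with the relevant lower $\|\cdot\|_\infty$-bound. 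All the supporting details you use check out: the weak density of the open ball in $B_{\mathcal R}$, the applicability of Lemma~\ref{conorm} with $u_m=\pm\tfrac13 e_m$, and the final lower bound $|||\tfrac23 e_N|||\geq\tfrac23$ via \eqref{eq-equivalent}.
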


We now study convexity properties of Read's space itself. It follows from Theorem \ref{theor-str-convR**} that $\mathcal{R}$ is strictly convex (as it is a subspace of $\mathcal{R}^{**}$), but we may actually prove that it
is weakly locally uniformly rotund. Recall that a Banach space $X$
is \emph{weakly locally uniformly rotund} (\emph{WLUR} in short)
if for every $x\in S_X$ and every sequence $(x_n)_{n\in \N}$ in
$S_X$, if $\|x+x_n\|\longrightarrow 2$, then $x=w-\lim_n x_n$. If one actually
gets that $x=\lim_n x_n$ in norm, we say that the space $X$ is
\emph{locally uniformly rotund} (\emph{LUR} in short). It is clear
that LUR implies WLUR and that WLUR implies strict convexity,
being the converse results false.

\begin{theorem} \label{Theo-Read-WLUR}
Read's space $\mathcal R$  is weakly locally uniformly rotund.
\end{theorem}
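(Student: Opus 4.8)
The plan is to verify the WLUR condition directly from the definition, exploiting the structure of the norm \eqref{eq-norm2} and Remark~\ref{remark-Readnorm}. So I fix $x\in S_{\mathcal{R}}$ and a sequence $(x_m)_{m\in\N}$ in $S_{\mathcal{R}}$ with $|||x+x_m|||\to 2$, and I aim to show $x = w\text{-}\lim_m x_m$. Since $\mathcal{R}=(c_0,|||\cdot|||)$ and the $|||\cdot|||$-topology coincides with the weak topology of $c_0$ on bounded sets only in a limited sense, the natural target is to prove coordinate-wise convergence $x_m \to x$ together with boundedness, which in $c_0$ gives weak convergence. (One must check weak convergence genuinely, but coordinate-wise convergence of a norm-bounded sequence plus control of the $\ell_\infty$-norm should suffice, since the weak topology on $c_0$ is governed by $\ell_1 = c_0^*$.)

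First I would extract information from $|||x+x_m|||\to 2$ by splitting the norm into its two pieces. Writing out
\begin{align*}
|||x+x_m||| &= \|x+x_m\|_\infty + \sum_{n\in\N} r_n\,\bigl|\langle x+x_m, v_n\rangle\bigr|,
\end{align*}
and comparing with $|||x|||+|||x_m|||=2$, the convexity/triangle inequalities force near-equality in every summand. Concretely, I expect to derive both $\|x+x_m\|_\infty \to \|x\|_\infty + \lim_m\|x_m\|_\infty$ and, for each fixed $n$, $\bigl|\langle x+x_m,v_n\rangle\bigr| \to |\langle x,v_n\rangle| + \lim_m|\langle x_m,v_n\rangle|$. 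The second family of conditions means the signs of $\langle x,v_n\rangle$ and $\langle x_m,v_n\rangle$ eventually align, and crucially that $\langle x_m,v_n\rangle \to \langle x,v_n\rangle$ for every $n$ (after also pinning down that the $\ell_\infty$-norms converge, so that each $|||x_m|||=1$ distributes its mass the same way $|||x|||$ does in the limit). Since $(v_n)$ is dense in $S_{\ell_1}$, convergence of $\langle x_m,v_n\rangle$ to $\langle x,v_n\rangle$ along the dense set $(v_n)$, combined with uniform boundedness of $(x_m)$ in $\ell_\infty$, propagates to $\langle x_m, v\rangle \to \langle x, v\rangle$ for every $v\in\ell_1$, which is exactly weak convergence $x_m \rightharpoonup x$ in $c_0$.

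The main obstacle I anticipate is justifying the passage from the single limit $|||x+x_m|||\to 2$ to simultaneous control of infinitely many summands. The sum $\sum_n r_n|\langle x+x_m,v_n\rangle|$ is an infinite series, so near-equality in the triangle inequality for the whole sum does not automatically give near-equality termwise in a uniform way; one must argue, using $\sum_n r_n \le 2$ and a Lebesgue-dominated-convergence/$\eps$-tail argument, that the deficit cannot concentrate in the tail. Here Remark~\ref{remark-Readnorm} is the key lever: it lets me convert $\ell_\infty$-norm convergence into $|||\cdot|||$-convergence and back, so that once I establish $\|x_m\|_\infty$ has a limit and the weak$^*$ (here coordinate-wise) limit candidate is $x$, the remark forces the $\ell_\infty$-norms to cooperate. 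I would handle the tail by fixing $n_0$ with $\sum_{n>n_0} r_n < \eps$, controlling the finitely many terms $n\le n_0$ exactly, and absorbing the rest into $\eps$.

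Finally, I would assemble the pieces: the termwise sign-alignment plus $\langle x_m,v_n\rangle\to\langle x,v_n\rangle$ on the dense set $(v_n)$, boosted to all of $S_{\ell_1}$ by density and $\sup_m\|x_m\|_\infty<\infty$, yields weak convergence of $(x_m)$ to $x$ in $c_0$. I would close by remarking (as the surrounding text already signals with assertion~(f)) that one does \emph{not} expect norm convergence, consistent with $\mathcal{R}$ failing to be LUR; indeed the $\ell_\infty$-part of the norm is too flat to force $\|x_m - x\|_\infty \to 0$, which is precisely why WLUR holds but LUR does not.
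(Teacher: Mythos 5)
Your opening moves match the paper's: extract termwise near-equalities from $|||x+x_m|||\to 2$ via weak$^*$ compactness, Remark~\ref{remark-Readnorm} and dominated convergence over the series, then exploit density of $(v_n)_{n\in\N}$ in $S_{\ell_1}$; and the tail-concentration worry you flag is indeed handled exactly this way (it is the easy part). The genuine gap is your central claim that the termwise conditions yield $\langle x_m,v_n\rangle\to\langle x,v_n\rangle$ for every $n$. They do not. Letting $\bar y$ be a weak$^*$ cluster point of $(x_m)$ in $\ell_\infty$, the equalities $\left|\langle x+\bar y,v_n\rangle\right|=\left|\langle x,v_n\rangle\right|+\left|\langle \bar y,v_n\rangle\right|$ give, exactly as in the strict convexity proof of Theorem~\ref{theor-str-convR**}, only that $\bar y=ax$ for some $a\in[0,1]$: sign alignment constrains the direction of the limit, not its magnitude. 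Moreover, the rest of your derived information genuinely cannot pin down $a=1$: writing $\sigma=\sum_{n\in\N} r_n\left|\langle x,v_n\rangle\right|$, the norm identities force $\lim_m\|x_m\|_\infty=1-a\sigma$ and $\lim_m\|x+x_m\|_\infty=2-(1+a)\sigma$, so your additivity condition $\lim_m\|x+x_m\|_\infty=\|x\|_\infty+\lim_m\|x_m\|_\infty$ reads $2-(1+a)\sigma=(1-\sigma)+(1-a\sigma)$, which is a tautology satisfied by \emph{every} $a$. Nothing in your scheme excludes, say, $\bar y=0$ (the mass of $x_m$ escaping to infinity, e.g.\ along basis vectors) or $\bar y=x/2$.

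The missing ingredient — the actual crux of the paper's proof — is the $(m_\infty)$-type behavior of the sup norm on $c_0$ (Lemma~\ref{conorm}): if $(u_m)$ is weak$^*$-null in $\ell_\infty$ with $\lim_m\|u_m\|_\infty$ existing and $u\in c_0$, then $\lim_m\|u+u_m\|_\infty=\max\bigl\{\|u\|_\infty,\ \lim_m\|u_m\|_\infty\bigr\}$. Applying this to $u_m=x_m-ax$ (after a further subsequence making $L=\lim_m\|x_m-ax\|_\infty$ exist) gives $\lim_m\|x_m\|_\infty=\max\bigl\{a\|x\|_\infty,\,L\bigr\}$ and $\lim_m\|x+x_m\|_\infty=\max\bigl\{(1+a)\|x\|_\infty,\,L\bigr\}$; combined with the additivity above this forces $L\leq a\|x\|_\infty$, hence $\lim_m\|x_m\|_\infty=a\|x\|_\infty$, i.e.\ $1-a\sigma=a(1-\sigma)$, so $a=1$, and then $x_m\to x$ weak$^*$, which for a bounded sequence in $c_0$ with limit in $c_0$ is exactly weak convergence (this last reduction of yours is fine, modulo the standard remark that it suffices to find, inside every subsequence, a further subsequence converging weakly to $x$). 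Note that Lemma~\ref{conorm} is precisely where $x\in c_0$, rather than $x\in\ell_\infty$, is used — which is why $\mathcal{R}$ is WLUR while for $\mathcal{R}^{**}$ one only gets strict convexity. Without this lemma or a substitute, your argument stalls at $\bar y=ax$ with $a\in[0,1]$ unknown.
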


\begin{proof}
Let $x, y_m \in S_{\mathcal R}$ with $|||x + y_m||| \longrightarrow 2$. Observe that it is enough to show that there is subsequence of $(y_m)_{m\in \N}$ which weakly converges to $x$. Passing to a subsequence, we may assume the existence of  $w^*-\lim_m y_m = \bar y \in B_{\mathcal R^{**}}$. Then, using Remark~\ref{remark-Readnorm},
\begin{equation}  \label{eq00000}
\begin{split}
 2  = \lim_m |||x+y_m||| &= \lim_m \|x + y_m\|_\infty + \sum_{n \in \N}r_n \left|\langle x + \bar y, v_n \rangle \right|  \\ & \leq  \|x \|_\infty  + \lim_m \|y_m\|_\infty                                        + \sum_{n \in \N}r_n  \left|\langle x, v_n \rangle \right|+ \sum_{n \in \N}r_n  \left|\langle \bar y, v_n \rangle \right| \\ &= |||x||| + \lim_m |||y_m||| = 2.
\end{split}
\end{equation}
This chain of inequalities implies that
\begin{equation} \label{eq-theo2-2}
\lim_m \|x + y_m\|_\infty  =  \|x \|_\infty  + \lim_m \|y_m\|_\infty
\end{equation}
and
\begin{equation} \label{eq-theo2-3}
 \left|\langle x + \bar y, v_n \rangle \right|  =  \left|\langle x, v_n \rangle \right|+ \left|\langle \bar y, v_n \rangle \right| \quad \textrm{ for all \,} n \in \N.
\end{equation}
As in the proof of Theorem \ref{theor-str-convR**}, if $\bar y$ is not of the form $ax$ with $a \geq 0$,  there exists $k \in \N$ such that the values of $ \langle x, v_k \rangle$ and $ \langle \bar y, v_k \rangle$ are non-zero and of opposite signs. Then $ \left|\langle x + \bar y, v_k \rangle \right| <   \left|\langle x, v_k \rangle\right| + \left|\langle \bar y, v_k \rangle\right|$, which contradicts \eqref{eq-theo2-3}. So,  $\bar y = ax \in \mathcal R$ for some $a \in [0,  +\infty)$. Since $1 =  \lim_m |||y_m||| \geq |||ax||| = a$, we deduce that $a \in [0, 1]$.

Passing again to a subsequence, we may assume that there exists $ \lim_m \|y_m - ax\|_\infty$. From Lemma \ref{conorm} we obtain that
\begin{equation} \label{eq-theo2-3+}
 \lim_m \|y_m\|_\infty = \max\left\{a\|x\|_\infty,\ \lim_m \|y_m - ax\|_\infty\right\},
\end{equation}
and
$$
\lim_m \|x + y_m\|_\infty  =  \max\left\{(1+a)\|x\|_\infty,\ \lim_m \|y_m - ax\|_\infty\right\}.
$$
Combining this with \eqref{eq-theo2-2}, we obtain that
$$
  \|x \|_\infty  +  \max\left\{a\|x\|_\infty,\ \lim_m \|y_m - ax\|_\infty\right\} =  \max\left\{(1+a)\|x\|_\infty,\ \lim_m \|y_m - ax\|_\infty\right\},
$$
which implies that
$$
 \lim_m \|y_m - ax\|_\infty \leq a\|x\|_\infty,
$$
so, by \eqref{eq-theo2-3+},
$$
 \lim_m \|y_m\|_\infty =a\|x\|_\infty.
$$
Taking into account that all the inequalities in  \eqref{eq00000} are, in fact, equalities, and substituting $\bar y = ax$,
we get
$$
 2  =  \|x \|_\infty  + \lim_m \|y_m\|_\infty+ \sum_{n \in \N}r_n  \left|\langle x, v_n \rangle \right|+ \sum_{n \in \N}r_n  \left|\langle ax, v_n \rangle \right| = (1 + a)|||x||| = 1+a.
$$
So $a = 1$ and $w-\lim_m(y_m - x) = ax - x = 0$.
\end{proof}

The above result cannot be improved to get that $\mathcal{R}$ is LUR. This is so because it follows easily from the definition, that the unit ball of a LUR space contains slices or arbitrarily small diameter and this contradicts Corollary \ref{Corollary-big-weak-open-subsets} or, alternatively, because the norm of the dual space of a LUR space is Fr\'{e}chet differentiable at every norm-attaining functional by the Smulyan's criterium (see \cite[Theorem I.1.4]{D-G-Z}, for instance).

\begin{remark}
Read's space $\mathcal R$ is not locally uniformly rotund.
\end{remark}

Observe that $\mathcal R$ is not smooth (this follows from the formula for the directional derivative of the norm $|||\cdot|||$ given in \cite[Lemma 2.5]{Read}). Nevertheless, one can modify  $\mathcal R$ in such a way that the modified space $\widetilde{\mathcal R}$  is simultaneously strictly convex and smooth (actually, its dual is also smooth and strictly convex), but the set of norm-attaining functionals remains the same. This is a consequence of the following argument which appears in the proof of \cite[Theorem 9.(4)]{DebsGodSR} by G.~Debs, G.~Godefroy, and J.\ Saint Raymond, and which we include here for the sake of completeness.

\begin{lemma} \label{lemma-smooth-renorming}
Let $X$ be a separable Banach space, let $\{x_n\colon n\in \N\}$ be a dense subset of $B_X$, and consider the bounded linear operator $T: \ell_2 \longrightarrow X$ be defined by $T\bigl((a_n)_{n\in \N}\bigr)=\sum\limits_{n=1}^{+\infty} \frac{a_n}{2^n} x_n$ for every $(a_n)_{n\in \N}\in \ell_2$. Consider the equivalent norm on $X$, denoted by $\|\cdot\|_{s}$, for which the set $V = B_X + T(B_{\ell_2})$ is its unit ball. Then, $(X,\|\cdot\|_{s})^*$ is strictly convex and so, $(X,\|\cdot\|_{s})$ is smooth, and $\NA(X) = \NA(X,\|\cdot\|_{s})$. Moreover, if $X$ is strictly convex, then  $(X,\|\cdot\|_{s})$ is also strictly convex; if $X^*$ is smooth, then $(X,\|\cdot\|_{s})^*$ is also smooth.
\end{lemma}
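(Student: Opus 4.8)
The plan is to reduce everything to a single computation of the dual norm of $\|\cdot\|_s$ together with two structural facts about the operator $T$: that its adjoint is injective and that $T$ itself is compact. First I would compute the dual norm. Since $V=B_X+T(B_{\ell_2})$ is the unit ball of $\|\cdot\|_s$, its dual norm is the support function of $V$, and because the supremum over a Minkowski sum splits additively,
\[
\|f\|_s^*=\sup_{v\in V}\langle f,v\rangle=\sup_{b\in B_X}\langle f,b\rangle+\sup_{a\in B_{\ell_2}}\langle f,Ta\rangle=\|f\|_{X^*}+\|T^*f\|_{\ell_2},
\]
using that $\ell_2^*=\ell_2$ isometrically and that $T^*f=\bigl(2^{-n}f(x_n)\bigr)_{n\in\N}$. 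The density of $\{x_n\colon n\in\N\}$ in $B_X$ makes $T^*$ injective, since $T^*f=0$ forces $f(x_n)=0$ for all $n$ and hence $f=0$; in particular $T^*f\neq 0$ whenever $f\neq 0$. I would also record that $T$ is compact, being the operator-norm limit of its finite-rank truncations because $\sum_n 4^{-n}<\infty$, so that $T(B_{\ell_2})$ is norm-compact and $V$ is genuinely closed.

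For the strict convexity of $(X,\|\cdot\|_s)^*$ I would assume $\|f+g\|_s^*=\|f\|_s^*+\|g\|_s^*$ with $f,g\neq 0$ and read off from the displayed formula that the triangle inequality must be an equality in the $\ell_2$-term, that is, $\|T^*f+T^*g\|_{\ell_2}=\|T^*f\|_{\ell_2}+\|T^*g\|_{\ell_2}$. Strict convexity of $\ell_2$, together with $T^*f,T^*g\neq 0$, yields $T^*g=\lambda T^*f$ for some $\lambda>0$, and injectivity of $T^*$ upgrades this to $g=\lambda f$; hence $\|\cdot\|_s^*$ is strictly convex, and $(X,\|\cdot\|_s)$ is smooth by the same duality principle used for Corollary~\ref{corollary-dual-smooth} (a space with strictly convex dual is smooth). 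For the identity $\NA(X)=\NA(X,\|\cdot\|_s)$ I would use that the supremum defining $\|f\|_s^*$ is attained on $V$ exactly when both component suprema are, and that the $\ell_2$-component is always attained, at $a=T^*f/\|T^*f\|_{\ell_2}$ (equivalently, because $T(B_{\ell_2})$ is compact). Thus $f$ attains its norm on $V$ precisely when $\sup_{b\in B_X}\langle f,b\rangle$ is attained, which is exactly the condition $f\in\NA(X)$.

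It remains to treat the two conditional statements. If $X$ is strictly convex, I would show that for every nonzero $f$ the face $\{v\in V\colon \langle f,v\rangle=\|f\|_s^*\}$ contains at most one point, which characterizes strict convexity of $(X,\|\cdot\|_s)$. Indeed, if $\langle f,v\rangle=\|f\|_s^*$ and $v=b+Ta$ with $b\in B_X$ and $a\in B_{\ell_2}$, then necessarily $\langle f,b\rangle=\|f\|_{X^*}$ and $\langle f,Ta\rangle=\|T^*f\|_{\ell_2}$; the latter pins down $a$, and hence $Ta$, by strict convexity of $\ell_2$, while the former pins down $b$ by strict convexity of $X$, so $v$ is unique. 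If instead $X^*$ is smooth, I would view $\|\cdot\|_s^*=\|\cdot\|_{X^*}+\|T^*(\cdot)\|_{\ell_2}$ as a sum of two continuous convex functions, each G\^{a}teaux differentiable away from the origin — the first by smoothness of $X^*$, the second because the $\ell_2$-norm is smooth and $T^*f\neq 0$ for $f\neq 0$ — whence the sum is G\^{a}teaux differentiable at every nonzero point and $(X,\|\cdot\|_s)^*$ is smooth.

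The dual-norm computation and the compactness of $T$ are routine; the conceptual crux, and the step I expect to require the most careful phrasing, is the transfer of attainment for $\NA(X)=\NA(X,\|\cdot\|_s)$ together with the forced uniqueness of the decomposition $v=b+Ta$ in the strict-convexity step. Both rest on the fact that $\ell_2$ is simultaneously strictly convex, smooth, and reflexive (so the $T$-part always attains) while $T^*$ is injective (so the $\ell_2$-summand never degenerates), and I would take care to argue that these properties pass through $T$ and $T^*$ without any extra hypothesis on $X$.
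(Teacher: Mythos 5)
Your proposal is correct and takes essentially the same route as the paper's proof: the dual-norm identity $\|f\|_s=\|f\|_{X^*}+\|T^*f\|_{\ell_2}$, injectivity of $T^*$ (which the paper gets from the dense range of $T$ and you get directly from the density of $\{x_n\}$ --- the same fact), compactness of $T(B_{\ell_2})$ both for the closedness of $V$ and for $\NA(X)=\NA(X,\|\cdot\|_s)$, strict convexity of $\ell_2$ for the dual, uniqueness of maximizers over $V$ for strict convexity of the renorming, and the sum-of-two-smooth-norms argument for dual smoothness. You simply spell out in full the steps the paper compresses into ``it is easy to show''.
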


\begin{proof}
The set $V = B_X + T(B_{\ell_2})$ is bounded, balanced and solid. Its closedness follows from the compactness of  $T(B_{\ell_2})$. This explains the definition of $\|\cdot\|_{s}$. A functional $f \in X^*$ attains its maximum on $V$ if and only if it attains its maximum both on $B_X$ and $T(B_{\ell_2})$, but all functionals attain their maximums on $T(B_{\ell_2})$, so $\NA(X) = \NA(X,\|\cdot\|_{s})$. It is easy to show that for every $f\in X^*$, $\|f\|_{s}=\|f\|+\|T^*(f)\|_2$. Since $T^*$ is one-to-one (as $T$ has dense range) and $\|\cdot\|_2$ is strictly convex, it follows that $(X,\|\cdot\|_{s})^*$ is strictly convex and so $(X,\|\cdot\|_{s})$ is smooth.

Finally, if $X$ is strictly convex, so is $(X,\|\cdot\|_{s})$ as a functional $f \in X^*$ cannot attain its maximum in two different points of $V$; if $X^*$ is smooth, then so is $(X,\|\cdot\|_{s})^*$ as its norm is the sum of two smooth norms.
\end{proof}

We are now ready to present a smooth version of Read's space.

\begin{example}
{\slshape Consider $\widetilde{\mathcal{R}}$ to be the renorming of Read's space $\mathcal{R}$ given by the procedure of the above lemma. Then, $\widetilde{\mathcal{R}}^*$ is strictly convex and smooth, so $\widetilde{\mathcal{R}}$ is also strictly convex and smooth, and solves negatively both problems (S) and (G). That is, $\widetilde{R}$ does not contains proximinal subspaces of finite codimension greater than or equal to 2 and $\NA(\widetilde{\mathcal{R}})$ does not contain two-dimensional linear subspaces.}

Indeed, $\widetilde{\mathcal{R}}$ and $\widetilde{\mathcal{R}}^*$ are smooth, and $\NA(\widetilde{\mathcal R})= \NA(\mathcal R)$, so $\NA(\widetilde{\mathcal R})$ does not contain linear subspaces of dimension greater than or equal to 2. But then it is easy to show that this implies that $\widetilde{\mathcal{R}}$ does not contain proximinal subspaces of finite codimension greater than or equal to 2 (see \cite[Proposition III.4]{Godefroy}, for instance).
\end{example}

\vspace*{1cm}

\noindent \textbf{Acknowledgment:\ } The authors are grateful to the anonymous referee for helpful suggestions which improved the final version of the paper. In particular, Proposition \ref{prop-bidual-abstract-version} and the possibility of Theorem \ref{thm-rough} to be true were suggested by the referee.

\newpage

\end{document}